\documentclass[12pt]{amsart}

\usepackage{amsmath, amsthm, amssymb, enumerate, amscd}
\usepackage[mathscr]{eucal}
\usepackage{pgf,tikz}
\usepackage{tikz, tikz-cd}
\usetikzlibrary{matrix,decorations.pathmorphing,arrows}

\theoremstyle{plain}
\newtheorem{theorem}{Theorem}[section]
\newtheorem{proposition}[theorem]{Proposition}
\newtheorem{corollary}[theorem]{Corollary}

\theoremstyle{definition}
\newtheorem{definition}[theorem]{Definition}

\newcommand{\N}{{\mathbb{N}}}
\newcommand{\Z}{{\mathbb{Z}}}
\newcommand{\Q}{{\mathbb{Q}}}
\newcommand{\R}{{\mathbb{R}}}

\newcommand{\m}{{\mathfrak{m}}}
\newcommand{\n}{{\mathfrak{n}}}

\begin{document}

\title[A characterization of PIDs] {A new characterization of principal ideal domains}

\author{Katie Christensen}
\address{Department of Mathematics\\ 
University of Louisville\\
Louisville, KY 40292, USA}
\email{katie.christensen@louisville.edu}
\author{Ryan Gipson}
\address{Department of Mathematics\\ 
University of Louisville\\
Louisville, KY 40292, USA}
\email{ryan.gipson@louisville.edu}
\thanks{$\ast$ the corresponding author}
\author{Hamid Kulosman$^\ast$}
\address{Department of Mathematics\\ 
University of Louisville\\
Louisville, KY 40292, USA}
\email{hamid.kulosman@louisville.edu}

\subjclass[2010]{Primary 13F15; Secondary 13A05, 13F10}

\keywords{Principal ideal domain, Unique factorization domain,  Atomic domain, B\'ezout domain, GCD domain, Schreier domain, pre-Schreier domain, AP domain, MIP domain, PC domain}

\date{}

\begin{abstract} 
In 2008 N.~Q.~Chinh and P.~H.~Nam characterized principal ideal domains as integral domains that satisfy the follo\-wing two conditions: (i) they are unique factorization domains, and (ii) all maximal ideals in them are principal. We improve their result by giving a characterization in which each of these two conditions is weakened. At the same time we improve a theorem by P.~M.~Cohn which characterizes principal ideal domains as atomic B\'ezout domains. We will also show that every PC domain is AP and that the notion of PC domains is incomparable with the notion of pre-Schreier domains (hence with the notions of Schreier and GCD domains as well). 
\end{abstract}

\maketitle

\section{Introduction and preliminaries}\label{intro}
The goal of this paper is to improve the 2008 result of N.~Q.~Chinh and P.~H.~Nam \cite[Corollary 1.2.]{cn} in which they gave a characterization of principal ideal domains as integral domains that satisfy the follo\-wing two conditions: (i) they are unique factorization domains, and (ii) all maximal ideals in them are principal. Our main result is a new charac\-terization of principal ideal domains obtained by weakenning each of the conditions in the Chinh and Nam's result. At the same time we improve the so called Cohn's theorem which characterizes principal ideal domains as atomic B\'ezout domains. In order to state our improvement, we will introduce a new condition for integral domains and prove that that new condition is indeed weaker than the corresponding conditions in the two mentioned theorems.

\medskip
We begin by recalling some definitions and statements. All the notions that we use  but not define in this paper can be found in the classical reference books \cite{c_book} by P.~M.~Cohn, \cite{g} by R.~Gilmer, \cite{k} by I.~Kaplansky, and \cite{n} by D.~G.~Northcott. 

\medskip
 In this paper all rings are {\it integral domains}, i.e., commutative rings with identity in which $xy=0$ implies $x=0$ or $y=0$.
A non-zero non-unit element $x$ of an integral domain $R$ is said to be {\it irreducible} (and called an {\it atom}) if $x=yz$ with $y,z\in R$ implies that $y$ or $z$ is a unit. A non-zero non-unit element $x$ of an integral doman $R$ is said to be {\it prime} if $x\mid yz$ with $y,z\in R$ implies $x\mid y$ or $x\mid z$. Every prime element is an atom, but not necessarily vice-versa. Two elements $x,y\in R$ are said to be {\it associates} if $x=uy$, where $u$ is a unit. We then write $x\sim y$. 

\medskip
An integral domain  $R$ is said to be {\it atomic} if every non-zero non-unit element of $R$ can be written as a (finite) product of atoms.  An integral domain $R$ is called a {\it principal ideal domain} (PID) if every ideal of $R$ is principal. The condition for integral domains that every ideal is principal is called the {\it PID condition}. An integral domain $R$ is called a {\it unique factorization domain} (UFD) if it is atomic and for every non-zero, non-unit $x\in R$, every two factorizations of $x$ into atoms are equal up to order and associates. An integral domain $R$ is called an {\it ACCP domain} if every increasing sequence of principal ideals of $R$ stabilizes. It is well-known that every PID is a UFD, every UFD is an ACCP domain, and every ACCP domain is atomic. 

\medskip
An integral domain $R$ is called a {\it B\'ezout domain} if every two-generated ideal of $R$ is principal. (An ideal $I$ of $R$ is said to be {\it two-generated} if $I=(a,b)$ for some $a,b\in R$.) The condition for integral domains that every two-generated ideal is principal is called the {\it B\'ezout condition}. Obviously, every PID is a B\'ezout domain. The converse is not true.

\begin{proposition}[{\cite[9.4, Exercise 5, pages 306-307]{df}}]\label{Bezou_example}
B\'ezout condition for integral domains is strictly weaker than the PID condition. More concretely, $R=\Z+X\Q[X]$ is a B\'ezout domain which is not a PID. 
\end{proposition}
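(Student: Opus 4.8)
The plan is to treat the two assertions about $R=\Z+X\Q[X]$ separately: that $R$ is \emph{not} a PID (via an explicit non-principal ideal) and that $R$ \emph{is} a B\'ezout domain (via an explicit generator for each two-generated ideal). Throughout I would exploit three elementary features of this ring: that $R\subseteq\Q[X]$; that every element of the form $cX$ with $c\in\Q$ lies in $R$, so that $X$ is divisible in $R$ by every nonzero integer; and that $S^{-1}R=\Q[X]$ for $S=\Z\setminus\{0\}$, so that greatest common divisors may be computed in the PID $\Q[X]$.

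For the failure of the PID condition I would look at $\m=X\Q[X]$, the kernel of the evaluation homomorphism $R\to\Z$, $f\mapsto f(0)$, which is a (prime) ideal of $R$. Suppose for contradiction that $\m=(h)$ with $h\in R$. Extending to $\Q[X]$ gives $h\,\Q[X]=\m\,\Q[X]=X\Q[X]$, whence $h=cX$ for some $c\in\Q^\ast$. But then every element of $(h)=cX\cdot R$ has its coefficient of $X$ lying in $c\Z$, while $\tfrac{c}{2}X\in\m$ has coefficient $\tfrac{c}{2}\notin c\Z$; this contradiction shows $\m$ is not principal, so $R$ is not a PID.

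The core of the B\'ezout direction is the following observation, which isolates the special behaviour of $X$: \emph{if an ideal $I$ of $R$ contains a nonzero integer $N$, then $X\Q[X]\subseteq I$}. Indeed, for any $p\in\Q[X]$ the element $\tfrac1N Xp$ has zero constant term and hence lies in $R$, so $Xp=N\cdot(\tfrac1N Xp)\in I$. Granting this, I would reduce the computation of generators to $\Q[X]$: given nonzero $f,g\in R$, let $m=\min\{\mathrm{ord}_0 f,\mathrm{ord}_0 g\}$ (order of vanishing at $0$) and write $f=X^m\tilde f$, $g=X^m\tilde g$ with $\tilde f,\tilde g\in\Q[X]$ not both vanishing at $0$. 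Set $d_0=\gcd_{\Q[X]}(\tilde f,\tilde g)$, so $d_0(0)\neq0$, and $h=X^md_0\in R$.

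The remaining and most delicate step is to normalize $d_0$ by a rational scalar so that $h$ actually generates the ideal $(f,g)$. Writing the constant terms over a common denominator as $\tilde f(0)=A/c$ and $\tilde g(0)=B/c$ with $A,B\in\Z$, I would rescale $d_0$ so that $d_0(0)=\gcd_\Z(A,B)/c$; this is precisely the choice that makes both cofactors $F=\tilde f/d_0$ and $G=\tilde g/d_0$ lie in $R$ with coprime integer constant terms $F(0),G(0)$, while keeping $\gcd_{\Q[X]}(F,G)=1$. The inclusion $(f,g)_R\subseteq(h)_R$ is then immediate from $f=hF$ and $g=hG$. For the reverse inclusion I would prove $(F,G)_R=R$: clearing denominators in a $\Q[X]$-B\'ezout identity for $F,G$ places a nonzero integer in $(F,G)_R$, so the observation above gives $X\Q[X]\subseteq(F,G)_R$, and an integer B\'ezout identity $xF(0)+yG(0)=1$ then upgrades a constant term $1$ to the element $1\in(F,G)_R$. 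Multiplying a resulting identity $UF+VG=1$ by $h$ yields $h=Uf+Vg\in(f,g)_R$, so $(f,g)_R=(h)_R$ is principal. I expect the main obstacle to be exactly this constant-term bookkeeping: because dividing by $X$ can leave $R$, one must treat the $\Q[X]$-content and the integer content of the constant terms as independent data and check that a single scalar normalization reconciles them, with the infinite divisibility of $X$ (the displayed observation) serving as the glue.
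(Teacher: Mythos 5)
Your proof is correct, but a comparison with ``the paper's own proof'' is not really possible here: the paper offers no argument for this proposition at all, citing it as an exercise in Dummit--Foote, so what you have written supplies precisely the material the paper outsources to that reference. Both halves of your argument are sound. For the non-PID half, extending a hypothetical generator $h$ of $\m=X\Q[X]$ to $\Q[X]$ correctly forces $h=cX$ with $c\in\Q^\ast$, and the element $\tfrac{c}{2}X\in\m$ whose $X$-coefficient lies outside $c\Z$ gives a genuine contradiction. For the B\'ezout half, the two pillars are both right: the observation that an ideal of $R$ containing a nonzero integer $N$ must contain all of $X\Q[X]$ (since $\tfrac1N Xp\in R$ for every $p\in\Q[X]$), and the rescaling of $d_0=\gcd_{\Q[X]}(\tilde f,\tilde g)$ so that the cofactors $F=\tilde f/d_0$, $G=\tilde g/d_0$ lie in $R$ with coprime integer constant terms; then a cleared-denominator $\Q[X]$-B\'ezout identity puts a nonzero integer in $(F,G)_R$, the observation gives $X\Q[X]\subseteq(F,G)_R$, and the integer identity $xF(0)+yG(0)=1$ lifts to $1\in(F,G)_R$, after which multiplying $UF+VG=1$ by $h$ yields $(h)_R\subseteq(f,g)_R$. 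Two cosmetic points only: you assert $h=X^m d_0\in R$ before performing the normalization that makes this true in the case $m=0$ (for $m\ge 1$ it holds for any scaling, since $X^m d_0\in X\Q[X]\subseteq R$), and the degenerate case where one of the two generators is zero should be dispatched in a sentence; neither affects correctness. Your route --- compute the gcd in the localization $\Q[X]$ and reconcile it with the integer gcd of the constant terms --- is the standard one for this example and is essentially what the cited exercise intends.
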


Note that the notation  $R=\Z+X\Q[X]$ means that $R$ consists of all the polynomials from $\Q[X]$ whose constant term is from $\Z$. 

\medskip
We call the {\it PIP condition} the condition for integral domains that every prime ideal is principal. We call the {\it MIP condition} the condition for integral domains that every maximal ideal is principal. The {\it MIP domains} are the domains which satisfy the MIP condition. Clearly, the PID condition implies the PIP condition and the PIP condition implies the MIP condition. More precise relations between these conditions are given in the next proposition and Corollary \ref{PIP_MIP}.

\begin{proposition}[{\cite[8.2, Exercise 6, page 283]{df}}]\label{pid_pip}
The PID condition for integral domains is equivalent to the PIP condition. In other words, if every prime ideal of an integral domain $R$ is principal, then $R$ is a PID. 
\end{proposition}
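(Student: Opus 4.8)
The plan is to argue by contradiction using Zorn's lemma, following the classical Kaplansky-style strategy. Since the PID condition trivially implies the PIP condition (a prime ideal is in particular an ideal), only the reverse implication requires work. So I would assume that every prime ideal of $R$ is principal but that $R$ is not a PID, and aim for a contradiction.

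First I would introduce the collection $\Sigma$ of all non-principal ideals of $R$, which is non-empty precisely because the PID condition fails. To apply Zorn's lemma I must check that every chain in $\Sigma$ has an upper bound lying in $\Sigma$. The natural candidate is the union $I$ of a chain, and the only point needing verification is that $I$ is non-principal: if $I=(a)$, then $a$ would belong to some member of the chain, forcing that member to equal $(a)$ and hence to be principal, a contradiction. Thus $\Sigma$ admits a maximal element $P$, which is non-principal by construction.

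The heart of the argument is to show that this maximal non-principal ideal $P$ is in fact prime, which directly contradicts the hypothesis and finishes the proof. Supposing $P$ is not prime, I would choose $a,b\notin P$ with $ab\in P$. Since $P+(a)$ strictly contains $P$, maximality of $P$ forces $P+(a)=(c)$ to be principal. I would then consider the conductor $J=(P:c)=\{\,r\in R : rc\in P\,\}$ and show that it strictly contains $P$: one checks $P\subseteq J$ directly, and $b\in J$ by writing $c=p+ra$ with $p\in P$ and computing $bc=bp+r(ab)\in P$. Maximality of $P$ then makes $J=(e)$ principal as well.

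The decisive and most delicate step is to establish the factorization $P=cJ$, from which $P=(ce)$ is principal, the desired contradiction. The inclusion $cJ\subseteq P$ is immediate from the definition of $J$. For the reverse inclusion I would use that $P\subseteq(c)$, which holds since $P\subseteq P+(a)=(c)$: any $p\in P$ can be written $p=cx$, and then $cx=p\in P$ forces $x\in J$, so $p\in cJ$. Verifying this identity cleanly, keeping careful track of why $P\subseteq(c)$ and why the product of the two generators recovers exactly $P$, is where the real care is needed; the remainder of the argument is routine bookkeeping with ideal membership.
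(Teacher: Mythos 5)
Your proof is correct and complete: the Zorn's lemma step (a union of a chain of non-principal ideals cannot be principal), the passage to a maximal non-principal ideal $P$, and the decisive computation $P+(a)=(c)$, $J=(P:c)\supsetneq P$, $P=cJ=(ce)$ are all sound, yielding the contradiction that $P$ is prime yet non-principal. The paper itself offers no proof of this proposition --- it is quoted with a citation to an exercise in Dummit--Foote --- and your argument is exactly the standard Kaplansky-style solution that the cited exercise outlines, so the two are in full agreement.
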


\medskip
The final item that we cover in this introduction is the notion of a {\it monoid ring} for a commutative monoid $M$, written additively. The elements of the monoid ring $F[X;M]$, where $F$ is a field and $X$ is a variable, are the {\it polynomial expressions}, also called {\it polynomials}, 
\begin{equation}\label{polynomials}
f(X)=a_1X^{\alpha_1}+\dots+a_nX^{\alpha_n},
\end{equation}
where $n\ge 0$, $a_1,\dots, a_n\in F$,\, $\alpha_1, \dots, \alpha_n\in M$.  The polynomials $f(X)=a$, $a\in F$, are called the {\it constant polynomials}. The addition and the multiplication of the polynomials are naturally defined. We say that $M$ is {\it cancellative} if for any elements $a,b,c\in M$,  $a+b=a+c$ implies $b=c$. The monoid $M$ is {\it torsion-free} if for any $n\in\N$ and $a,b\in M$, $na=nb$ implies $a=b$. All the monoids that we use in this paper are cancellative and torsion-free, hence the monoid rings $F[X;M]$ are integral domains. 

\section{A new condition for integral domains}
We introduce a new condition for integral domains, that we haven't met in the literature.

\begin{definition}
We call the {\it principal containment condition} (PC) the condition for integral domains that every proper two-generated ideal is contained in a proper principal ideal. We say that an integral domain is a {\it PC domain} if it satisfies the PC condition.
\end{definition}

Clearly, B\'ezout condition implies the PC condition, and the MIP condition implies the PC condition. 

\begin{proposition}\label{mip_pc}
There exists a B\'ezout domain which is not a MIP domain.
\end{proposition}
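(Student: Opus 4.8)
The plan is to exhibit a single witnessing ring, and the cleanest source of a B\'ezout domain with a non‑principal maximal ideal is a \emph{rank‑one valuation domain whose value group is not discrete}. Concretely, I would take $V$ to be a valuation domain of a field $K$ whose valuation $v\colon K^{*}\to\Q$ is surjective onto the ordered group $\Q$, and I would write $\m=\{x\in V : v(x)>0\}\cup\{0\}$ for its maximal ideal. Such a $V$ exists: one may take the valuation ring of the Puiseux series field $\bigcup_{n\ge 1}F((t^{1/n}))$ with $v$ the least‑exponent valuation (whose value group is $\bigcup_{n}\tfrac1n\Z=\Q$), or simply invoke the general fact that every ordered abelian group occurs as a value group, as in \cite{g}.

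Next I would verify that $V$ is a B\'ezout domain, which is immediate from the defining feature of valuation domains, namely that their ideals are totally ordered by inclusion. For $a,b\in V$ we have either $v(a)\le v(b)$ or $v(b)\le v(a)$; in the first case $b/a\in V$, so $b\in(a)$ and $(a,b)=(a)$, and symmetrically in the second. Hence every two‑generated ideal is principal, which is exactly the B\'ezout condition.

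It then remains to show that $\m$ is not principal, and this is where the non‑discreteness of $\Q$ is used. Suppose $\m=(t)$ for some $t\in V$ and set $\gamma:=v(t)>0$. Every nonzero $x\in\m$ is then of the form $x=ts$ with $s\in V$, so $v(x)=v(t)+v(s)\ge\gamma$; thus $\gamma$ would be the least element of $v(\m\setminus\{0\})=\Q_{>0}$. But $\Q_{>0}$ has no least element (for instance $0<\gamma/2<\gamma$), a contradiction. Therefore $\m$ is a non‑principal maximal ideal and $V$ fails the MIP condition while being B\'ezout.

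The only genuine obstacle is the opening step: ensuring the existence of a valuation domain with value group $\Q$ rather than the discrete group $\Z$ (the latter yields a DVR, whose maximal ideal \emph{is} principal). Once such a $V$ is secured, the two verifications above are routine. I note in passing that if one wishes to avoid valuation‑theoretic existence theorems altogether, the ring $\overline{\Z}$ of all algebraic integers provides an alternative witness, being a B\'ezout domain all of whose maximal ideals are non‑principal; but the valuation‑domain construction keeps both required properties entirely transparent.
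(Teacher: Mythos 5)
Your proof is correct, and the witnessing ring is, at bottom, the same object the paper uses: the paper takes $D=F[X;\Q_+]_{\m}$, whose nonzero elements are exactly the $uX^{\alpha}$ with $u$ a unit and $\alpha\in\Q_+$, and this is precisely a rank-one valuation domain with value group $\Q$ (the valuation being $uX^{\alpha}\mapsto\alpha$). The difference is in packaging. The paper builds the example by localizing a monoid ring and verifies both properties by bare-hands monomial computation — divisibility of monomials gives B\'ezout, and the absence of a smallest positive rational exponent shows $\m R_{\m}$ is not even finitely generated — with no valuation theory invoked; this keeps the argument self-contained and matches the technique reused in Propositions \ref{AP_not_PC}, \ref{PC_not_PS}, and \ref{MIP_not_PS}. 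You instead appeal to the general theory: total ordering of ideals in a valuation domain gives B\'ezout, and a principal maximal ideal would force a least element of $v(\m\setminus\{0\})=\Q_{>0}$. Your route makes the structural reason for the example transparent (non-discreteness of the value group is exactly what is needed), at the cost of importing an existence theorem (Puiseux series or Krull's theorem on prescribed value groups); the paper's route costs a little explicit computation but needs nothing beyond monoid rings. Your parenthetical fallback $\overline{\Z}$ also works — it is B\'ezout, and since every non-unit is a square of a non-unit it has no atoms, so no maximal ideal can be principal — though it trades one existence theorem for another (B\'ezoutness of $\overline{\Z}$ is itself a nontrivial classical fact).
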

\begin{proof}
Consider the monoid ring $R=F[X;\Q_+]$  ($F$ a field),  consisting of all the polynomials of the form
\begin{equation*}
f(X)=a_0+a_1X^{\alpha_1}+\dots+a_nX^{\alpha_n}
\end{equation*} 
with $a_0, a_1,\dots,a_n\in F$ and $0<\alpha_1<\dots<\alpha_n$ from $\Q_+$. 
 Let $\frak{m}$ be the maximal ideal of $R$ consisting of all the polynomials in $R$ whose constant term is $0$. Consider the localization $D=R_\frak{m}$. The units of $D$ have the form
\begin{equation*}
\frac{a_0+a_1X^{\alpha_1}+\dots +a_mX^{\alpha_m}}{b_0+b_1X^{\beta_1}+\dots +b_nX^{\beta_n}},
\end{equation*}
where the $a_i$ and $b_j$ are from $F$ with $a_0$, $b_0$ non-zero. Hence every non-zero element of $D$ has the form $uX^\alpha$, where $u$ is a unit in $D$ and $\alpha\in\Q_+$. The maximal ideal $\frak{m}R_\frak{m}$ of $D$ consists of all $uX^\alpha$ with $\alpha>0$ and is not finitely generated. So $D$ does not satisfy the MIP condition. However, for any two elements $uX^\alpha$, $vX^\beta$ of $D$ with $\alpha\le \beta$ we have $uX^\alpha\mid vX^\beta$ and so $D$ is B\'ezout.
\end{proof}

We will show later (see Proposition \ref{MIP_not_PS}) that
there also exists a MIP domain which is not a B\'ezout domain. Thus the notion of a PC domain is strictly weaker than each of the notions MIP and B\'ezout. Finally in Proposition \ref{PC_not_PS} we show that the notion of a PC domain is not ``just a union'' of the notions of B\'ezout and MIP domains, i.e., that there is a PC domain which is neither B\'ezout, nor MIP. 

\bigskip
Consider now the following diagram.

\bigskip\bigskip
\hspace{-2cm}
\begin{tabular}{cc}
\begin{tikzpicture}
\node[draw] (A) at (2.98,0) {atomic domain};
\node[draw] (UFD) at (2.99,-3) {UFD};
\node[draw] (PID) at (3,-6) {PID};
\draw[-implies, double equal sign distance] (PID) -- (UFD);
\draw[-implies, double equal sign distance] (UFD) -- (A);
\end{tikzpicture}
& \hspace{.8cm}
\begin{tikzpicture}
\node[draw] (PC) at (13,0) {\begin{tabular}{c} PC: every proper 2-generated ideal\\ contained in a proper principal ideal\end{tabular}};
\node[draw] (B) at (10,-3) {\begin{tabular}{c}B\'ezout: every 2-generated\\ ideal principal\end{tabular}};
\node (B_phantom1) at (12.56,-2.7) {};
\node (B_phantom2) at (12.57,-3.4) {};
\node[draw] (MIP) at (16.2,-3.01) {\begin{tabular}{c} MIP: every maximal\\ ideal principal\end{tabular}};
\node (MIP_phantom1) at (14.14,-2.71) {};
\node (MIP_phantom2) at (14.15,-3.39) {};
\node[draw] (PIDc) at (10.01,-6) {\begin{tabular}{c} PID: every\\ ideal  principal\end{tabular}};
\node[draw] (PIP) at (16.19,-6.01) {\begin{tabular}{c} PIP: every prime\\ ideal principal\end{tabular}};
\draw[-implies, double equal sign distance] (PIDc) -- (B);
\draw[implies-implies, double equal sign distance] (PIDc) -- (PIP);
\draw[-implies, double equal sign distance] (PIP) -- (MIP);
\draw[-implies, double equal sign distance] (MIP_phantom1) -- node{X} (B_phantom1);
\draw[-implies, double equal sign distance] (B_phantom2) -- node{X} (MIP_phantom2);
\draw[-implies, double equal sign distance] (B) -- (PC);
\draw[-implies, double equal sign distance] (MIP) -- (PC);
\end{tikzpicture}
\end{tabular}

\bigskip\bigskip\bigskip
There is one equivalence in the diagram, the rest are implications (and all of them are strict) and non-implications. The higher the condition is (in each of the two parts of the diagram), the weaker it is. One can try to characterize PIDs by combining one condition from the left part of the diagram with one condition from the right part of the diagram.

\medskip
The next two theorems are characterizations of PIDs of that type. The first one ({\it Cohn's Theorem}) is Theorem \ref{cohn_thm} that was first stated in \cite[Proposition 1.2]{c}. (Cohn remarks in \cite{c} that it is easy to prove that Bezout's domains which satisfy ACCP are PIDs, however, ACCP is not equivalent to atomicity, as it was later shown.) The proof can be seen in Cohn's book \cite[10.5 Theorem 3]{c_book}.  The second one is Theorem \ref{cn_thm}, proved in 2008 by Chinh and Nam in \cite{cn}. 

\begin{theorem}[Cohn's Theorem]\label{cohn_thm}
If $R$ is an atomic B\'ezout domain, then $R$ is a PID.
\end{theorem}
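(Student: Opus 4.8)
The plan is to reduce the problem to prime ideals and then use atomicity to locate a single generator inside each one. By Proposition \ref{pid_pip} the PID condition is equivalent to the PIP condition, so it suffices to prove that every prime ideal of $R$ is principal. Since the zero ideal is already principal, I would fix a nonzero prime ideal $P$ and aim to produce an atom $p \in P$ for which $P = (p)$.

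First I would extract such an atom. As $P$ is proper, every nonzero $a \in P$ is a nonunit, so by atomicity it factors as $a = q_1 \cdots q_m$ with each $q_i$ an atom. Because $P$ is prime and this product lies in $P$, at least one factor, say $p := q_i$, lies in $P$. This is the only place atomicity is genuinely needed: it guarantees that every nonzero prime ideal meets the set of atoms.

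The heart of the argument is then a short divisibility computation driven by the B\'ezout hypothesis. For an arbitrary $b \in P$, the two-generated ideal $(p,b)$ is principal, say $(p,b) = (d)$. Then $d \mid p$, and since $p$ is an atom, $d$ is either a unit or an associate of $p$. But $(p,b) \subseteq P$ and $P$ is proper, so $(d) \ne R$ and $d$ cannot be a unit; hence $d \sim p$ and therefore $b \in (p,b) = (p)$. As $b \in P$ was arbitrary, $P \subseteq (p) \subseteq P$, whence $P = (p)$ is principal, completing the reduction.

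The step requiring the most care is ensuring that each hypothesis is used in precisely the right place: atomicity serves only to place an atom inside $P$, while the B\'ezout condition (specifically, that $(p,b)$ is \emph{principal}, not merely two-generated) is what forces the generator $d$ to be an associate of $p$ rather than a proper factor. It is worth noting that one does not need the full strength of unique factorization here, nor the fact that atoms are prime in a GCD domain; beyond Proposition \ref{pid_pip}, the only structural input is that an atom admits no nonunit proper divisor, which is exactly what pins down $d$.
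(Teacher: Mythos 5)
Your proof is correct, and it is essentially the argument the paper uses for its own generalization (Theorem \ref{main}, atomic $+$ PC $\Rightarrow$ PID, which subsumes Cohn's Theorem since B\'ezout implies PC): reduce to prime ideals via Proposition \ref{pid_pip}, use atomicity and primeness to place an atom $p$ in $P$, then use the principal (or principal-containing) ideal $(p,b)$ together with the fact that an atom has no proper nonunit divisor to conclude $P=(p)$. The only difference is that you invoke the full B\'ezout hypothesis $(p,b)=(d)$ where the paper only needs containment in a proper principal ideal, so your argument is the paper's proof specialized to the stronger hypothesis.
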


\begin{theorem}[{\cite[Corollary 1.2.]{cn}}]\label{cn_thm}
If $R$ is a UFD in which every maximal ideal is principal, then $R$ is a PID.
\end{theorem}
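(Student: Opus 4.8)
The plan is to reduce the whole statement to a claim about prime ideals and then exploit the interaction between unique factorization and the MIP hypothesis. By Proposition \ref{pid_pip} the PID condition is equivalent to the PIP condition, so it suffices to prove that every prime ideal $P$ of $R$ is principal. The ideal $(0)$ is principal, so I may assume $P\ne(0)$.

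First I would record the standard fact that a nonzero prime ideal of a UFD contains a prime element: picking any $0\ne x\in P$ and writing $x=p_1\cdots p_n$ as a product of prime elements (possible since $R$ is atomic and every atom is prime in a UFD), primeness of $P$ forces some $p_i\in P$. Thus $(p_i)\subseteq P$ with $p_i$ prime.

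The technical heart of the argument is the following height statement, which is where unique factorization really enters: if $p$ is a prime element of a UFD, then the principal prime $(p)$ contains no nonzero prime ideal properly. To prove this I would take a prime $Q$ with $(0)\subsetneq Q\subseteq(p)$ and a nonzero $x\in Q$, and write $x=p^ky$ with $k\ge 1$ and $p\nmid y$ (using that the exponent of $p$ in the factorization of $x$ is finite). Repeatedly applying primeness of $Q$ to the factorization $x=p\cdot(p^{k-1}y)$ either puts $p$ into $Q$ at some stage or forces $y\in Q\subseteq(p)$, i.e.\ $p\mid y$, contradicting $p\nmid y$; hence $p\in Q$ and $Q=(p)$. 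The finiteness of the exponent $k$ --- hence the termination of this descent --- is exactly the point at which the UFD hypothesis is indispensable, and this is the step I expect to require the most care.

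With the lemma in hand the conclusion is quick. Given a nonzero prime $P$, choose a maximal ideal $\m\supseteq P$. By the MIP hypothesis $\m=(m)$ is principal, and since $\m$ is a nonzero proper prime ideal its generator $m$ is a prime element. The lemma applied to $(m)$ shows that no nonzero prime sits strictly below $\m$; since $(0)\subsetneq P\subseteq\m$, this forces $P=\m=(m)$. Thus every nonzero prime of $R$ is principal, and Proposition \ref{pid_pip} yields that $R$ is a PID. As a byproduct the argument shows that every nonzero prime is maximal, i.e.\ that $R$ has Krull dimension at most one.
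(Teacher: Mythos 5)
Your proof is correct, but it takes a genuinely different route from the paper's. The paper never proves Theorem \ref{cn_thm} directly: it cites it and then supersedes it via Theorem \ref{main} (atomic $+$ PC implies PID), of which this statement is a special case since every UFD is atomic and MIP implies PC. The proof of Theorem \ref{main} finds an atom $p_1\in P$ exactly as you do, but then closes the argument locally: for any $y\in P$, the PC condition places $(p_1,y)$ inside a proper principal ideal $(c)$; since $p_1$ is an atom and $c$ is a non-unit, $c\sim p_1$, hence $p_1\mid y$ and $P=(p_1)$. No maximal ideal above $P$ is ever chosen and no height considerations appear. Your argument instead spends the full strength of both hypotheses: unique factorization powers the descent on the $p$-adic exponent in your height-one lemma (a nonzero prime $Q\subseteq(p)$ must equal $(p)$), and the MIP hypothesis is used to select a principal maximal ideal $\m\supseteq P$ and collapse $P$ onto it. In exchange you obtain strictly more structural information --- every nonzero prime of $R$ is maximal, so $R$ has Krull dimension at most one --- and a self-contained argument in classical UFD theory, essentially Kaplansky's observation that principal primes generated by prime elements have height one. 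What the paper's route buys is precisely its main point: the same conclusion under hypotheses weak enough to subsume both Cohn's theorem (atomic B\'ezout) and Chinh--Nam (UFD $+$ MIP) in one stroke. Note that your final step has no analogue under the PC hypothesis, since PC does not provide any principal maximal ideal over $P$; so, unlike the paper's argument, yours does not extend to the weaker setting of Theorem \ref{main}.
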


Our next theorem improves both of the above theorems. It weakens one of the conditions in Cohn's theorem and both conditions in the Chinh and Nam's theorem.

\begin{theorem}\label{main}
Let $R$ be an atomic domain which satisfies the PC condition. Then $R$ is a PID.
\end{theorem}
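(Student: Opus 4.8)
The plan is to reduce everything to the criterion of Proposition~\ref{pid_pip}: it suffices to prove that $R$ satisfies the PIP condition, i.e.\ that every prime ideal of $R$ is principal. The zero ideal is already principal, so the only work is to treat an arbitrary nonzero prime ideal $P$, and my goal is to exhibit a single generator for it.

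First I would locate an atom inside $P$. Pick any nonzero $x\in P$; since $P$ is proper, $x$ is a nonunit, and atomicity lets me write $x=p_1\cdots p_n$ with each $p_i$ an atom. As $P$ is prime and $p_1\cdots p_n=x\in P$, some factor $p:=p_i$ lies in $P$. This produces a chain $(p)\subseteq P$ in which $p$ is an atom, and the remaining task is to show there is no slack in this containment.

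The heart of the argument is to upgrade $(p)\subseteq P$ to an equality, and this is exactly where the PC condition enters. Suppose, for contradiction, that there is some $a\in P$ with $a\notin(p)$ (necessarily $a\neq 0$). Because $p,a\in P$ and $P\neq R$, the two-generated ideal $(p,a)$ is proper, so the PC condition supplies a proper principal ideal $(d)$ with $(p,a)\subseteq(d)$. In particular $d$ is a nonunit and $d\mid p$. Since $p$ is an atom, writing $p=de$ forces $e$ to be a unit, so $d\sim p$; hence $(d)=(p)$ and $a\in(d)=(p)$, contradicting $a\notin(p)$. Therefore $P=(p)$ is principal, and Proposition~\ref{pid_pip} then immediately yields that $R$ is a PID.

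The step I expect to be the crux is precisely this last one: the PC condition delivers a \emph{nonunit} common divisor $d$ of $p$ and $a$, and the fact that $p$ is an atom is exactly what is needed to promote $d\mid p$ into $d\sim p$, and thus $p\mid a$. The same mechanism, incidentally, shows that PC forces every atom to be prime, which is the promised fact that every PC domain is AP: if $p$ is an atom with $p\mid bc$ and $p\nmid b$, then $(p,b)$ cannot be proper (otherwise PC would again yield a nonunit $d\sim p$ with $d\mid b$, forcing $p\mid b$), so $(p,b)=R$ and the standard B\'ezout computation $1=rp+sb$, multiplied through by $c$, gives $p\mid c$. Together with atomicity this already makes $R$ a UFD, which is how the present result simultaneously recovers and strengthens Theorems~\ref{cohn_thm} and~\ref{cn_thm}.
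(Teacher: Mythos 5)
Your proof is correct and follows essentially the same route as the paper's: reduce to the PIP condition via Proposition~\ref{pid_pip}, use atomicity and primality of $P$ to find an atom $p\in P$, and then use the PC condition together with the fact that $p$ is an atom to force any proper principal ideal containing $(p,a)$ to equal $(p)$, so that $P=(p)$. The only cosmetic differences are that you phrase the final step as a contradiction where the paper argues directly, and you explicitly note that $(p,a)$ is proper (a point the paper leaves implicit); your closing observation that the same mechanism proves every PC domain is AP is exactly the paper's Proposition~\ref{PC_AP}.
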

\begin{proof}
By Proposition \ref{pid_pip}, it is enough to show that every prime ideal is principal. Let $P$ be a nonzero prime ideal of $R$. Let $x\ne 0$ be an element of $P$. Since $R$ is atomic, we can write $x=p_1p_2\cdots p_n$, where the $p_i$'s are atoms.  As $p_1p_2\cdots p_n\in P$ and $P$ is prime, at least one of the $p_i$'s, say $p_1$, is in $P$. We claim that $P=(p_1)$. Let $y$ be an element of $P$. Since $R$ satisfies PC, $(p_1, y)\subset (c)$ for some proper principal ideal $(c)$. From $p_1=ct$ for some $t\in R$, we have $t\sim 1$ (as $p_1$ is an atom and $c\nsim 1$). Now from $y=cr$ for some $r\in R$, we get $y=p_1t^{-1}r$, hence $y\in (p_1)$. Thus $P=(p_1)$.
\end{proof}

\section{Merging the diagrams and making them more detailed}

In this section we will merge the diagrams from the previous section and make them more detailed. That will illustrate the importance of the notion of a PC domain that we introduced in the previous section. We first need to give some definitions. 

\medskip
An integral domain is called a {\it GCD domain} if every two elements of it have a greatest common divisor (see \cite[page p.4]{a}). An element $c$ of an integral domain $D$ is called {\it primal} if for any $a,b\in D$ we have: $c\mid ab \Rightarrow c=c_1c_2$ where $c_1\mid a$ and $c_2\mid b$. This notion was introduced in \cite{c}, where a new version of the definition of Schreier domains is also given: an integral domain $D$ is {\it Schreier} if it is integrally closed and each of its elements is primal. The notion of pre-Schreier domains is introduced in \cite{z}: an integral domain is {\it pre-Schreier} if each of its elements is primal. Clearly every Schreier domains is pre-Schreier, but not conversely. A new proof of the well-known result that every GCD domain is Schreier was given in \cite{c}. The converse is not true. Also, every B\'ezout domain is GCD, but not conversely (see \cite{c}). An integral domain is called an {\it AP domain} if each of its atoms is prime, i.e., if the notions of an atom and of a prime element in it coincide. Every pre-Schreier domain is an AP domain, but not vice-versa (see \cite{z}). It is well-known that an integral domain is a UFD if and only if it is atomic and AP.

\bigskip\bigskip
\hspace{-1.8cm}
\begin{tikzpicture}
\node[draw] (ID) at (10.94,0) {\begin{tabular}{c} Integral\\ domain\end{tabular}};
\node[draw] (AP) at (11.01, -2) {AP};
\node (phantom_AP) at (11.3, -1.9) {};
\node (phantom_PS) at (11.3, -3.2) {\phantom{Pre-Schreier}};
\node[draw] (PS) at (10.96,-3.5) {Pre-Schreier};
\node[circle, draw] (PC) at (14,-8) {\phantom{X}PC\phantom{X}};
\node[draw] (A) at (4.9,-2) {Atomic};
\node[draw] (S) at (10.94,-5) {Schreier};
\node[draw] (GCD) at (10.95, -6.5) {GCD};
\node[draw] (UFD) at (5, -10.1) {\begin{tabular}{c} UFD\\ $\equiv$ Atomic GCD\\  $\equiv$ Atomic Schreier\\ $\equiv$ Atomic pre-Schreier\\ $\equiv$ Atomic AP \end{tabular}};
\node[draw] (B) at (10.96,-10) {B\'ezout};
\node[draw] (MIP) at (17,-9.9) {MIP};
\node[draw] (PID) at (5,-13.5) {\begin{tabular}{c}PID$\equiv$ PIP\\ $\equiv$ Atomic B\'ezout\\ $\equiv$ Atomic MIP \\ $\equiv$ Atomic PC\end{tabular}};                                                                                                                                                                                                
\draw[-implies, double equal sign distance] (PID) -- (UFD);
\draw[-implies, double equal sign distance] (PID) -- (B);
\draw[-implies, double equal sign distance] (PID) -- (MIP);
\draw[-implies, double equal sign distance] (UFD) -- (GCD);
\draw[-implies, double equal sign distance] (B) -- (GCD);
\draw[-implies, double equal sign distance] (B) -- node{X} (MIP);
\draw[-implies, double equal sign distance] (B) -- (PC);
\draw[-implies, double equal sign distance] (MIP) -- (PC);
\draw[-implies, double equal sign distance] (MIP) .. controls (17,-5) ..  node{X} (phantom_PS);
\draw[-implies, double equal sign distance] (UFD) -- (A);
\draw[-implies, double equal sign distance] (GCD) -- (S);
\draw[-implies, double equal sign distance] (S) -- (PS);
\draw[-implies, double equal sign distance] (PC) ..controls (14,-4).. (phantom_AP);
\draw[-implies, double equal sign distance] (PS) -- (AP);
\draw[-implies, double equal sign distance] (AP) -- (ID);
\draw[-implies, double equal sign distance] (A) -- (ID);
\end{tikzpicture}

\bigskip\bigskip
{\it Let us say a few words about the importance of the notion of PC domains.}
An old result of Skolem from 1939 states that an integral domain is a UFD if and only if it is atomic and GCD. However, weaker conditions were found which, together with atomicity, imply the UFD condition, namely, an integral domain is UFD if and only if it is atomic and AP (or pre-Schreier, or Schreier, or GCD). An ana\-lo\-gous situation is with the conditions which, together with atomicity, imply the PID condition (see the previous diagram). Cohn's 1968 theorem (\cite{c}) states an integral domain is PID if and only if it atomic and B\'ezout. The result od Chinh and Nam (\cite{cn}) states that an integral domain is a PID if and only if it is UFD and MIP, which is, as a consequence of our theorem \ref{main}, equivalent with atomic and MIP. {\it Our notion of PC domains provides a condition which is weaker than each of the conditions B\'ezout and MIP, however, it is still strong enough to be, together with atomicity, equivalent with the PID condition.} That is the main value of this notion.

\bigskip
We will now justify the previous diagram.

\medskip
\begin{proposition}\label{PC_AP}
Every PC domain is an AP domain.
\end{proposition}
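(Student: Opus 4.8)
The plan is to verify the defining property of an AP domain directly: I take an arbitrary atom $p$ and assume $p \mid ab$ for some $a, b \in R$, and I aim to conclude that $p \mid a$ or $p \mid b$. The natural object to feed into the PC condition is the two-generated ideal $(p,a)$, and the whole argument splits according to whether this ideal is proper or equal to all of $R$.

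Suppose first that $(p,a)$ is a proper ideal. Then by the PC condition there is a proper principal ideal $(c)$ with $(p,a) \subseteq (c)$. From $p \in (c)$ I can write $p = cs$, and since $p$ is an atom while $c$ is a non-unit (because $(c) \ne R$), the cofactor $s$ must be a unit. Hence $c \sim p$ and $(c) = (p)$. Since $a \in (p,a) \subseteq (p)$, this gives $p \mid a$. This is exactly the ``atom forces the cofactor to be a unit'' reasoning already carried out in the proof of Theorem \ref{main}.

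The remaining case is $(p,a) = R$. Here PC gives no information, so I exploit the coprimality of $p$ and $a$ directly: writing $1 = px + ay$ for suitable $x, y \in R$ and multiplying through by $b$ yields $b = pbx + (ab)y$. Because $p \mid ab$ by hypothesis, both terms on the right-hand side are divisible by $p$, and therefore $p \mid b$. In either case one of the two divisibilities holds, so $p$ is prime.

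The one genuine subtlety — and the step I would be most careful about — is that the PC condition only speaks about \emph{proper} two-generated ideals, so the case $(p,a) = R$ genuinely falls outside its scope and must be dispatched by the elementary multiply-through trick rather than by PC. Once that case split is in place, everything else is routine and reuses the local argument from Theorem \ref{main}.
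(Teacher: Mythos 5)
Your proof is correct and is essentially the paper's own argument: both hinge on the same two steps, namely the multiply-through-by-$b$ trick to handle the case where $(p,a)$ is the whole ring, and the PC condition plus the atom-forces-associate observation when $(p,a)$ is proper. The only difference is presentational — you run a direct case split where the paper assumes $p\nmid a$ and $p\nmid b$ and derives a contradiction — so the logical content is identical.
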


\begin{proof}
Let $R$ be a PC domain and let $a$ be an atom of $R$. Suppose $a\mid xy$ for some $x,y\in R$, but $a\nmid x$ and $a\nmid y$. Then $x,y$ are not units. The ideal $(a,x)$ is proper, otherwise $ra+sx=1$ for some $r,s\in R$, hence $rya+sxy=y$, hence $rya+sta=y$ for some $t\in R$, hence $a\mid y$, a contradiction. Since $R$ is PC, there is a proper ideal $(b)$ containing $(a,x)$. But then $a\in (b)$, so $b\mid a$, hence (since $a$ is an atom and $b$ is a non-unit) $b\sim a$. Also $x\in (b)$, so $b\mid x$, hence $a\mid x$ (as $b\sim a$), a contradiction.
\end{proof}

\begin{proposition}\label{AP_not_PC}
There exists an AP domain which is not a PC domain.
\end{proposition}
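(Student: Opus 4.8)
The plan is to exhibit a concrete integral domain that is AP but violates the PC condition. The key preliminary observation is to restate the PC condition in divisibility terms: a proper two-generated ideal $(a,b)$ is contained in a proper principal ideal $(c)$ precisely when $c$ is a \emph{non-unit} common divisor of $a$ and $b$. Consequently, the PC condition fails exactly when there exist $a,b\in R$ for which $(a,b)$ is a proper ideal yet $a$ and $b$ admit no non-unit common divisor (equivalently, their only common divisors are units). Once this reformulation is in place, the natural place to look for a counterexample is among unique factorization domains that are not PIDs, since in a UFD one has complete control over common divisors.

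Accordingly, I would take $R=\Z[X]$ (or, equally well, $R=F[X,Y]$ for a field $F$). Since $R$ is a UFD it is atomic, and by the well-known equivalence recalled in the text every UFD is AP; thus $R$ is an AP domain. To show that $R$ is not a PC domain I would consider the ideal $(2,X)$ (respectively $(X,Y)$). This ideal is proper because $R/(2,X)\cong\F_2$ is a field, so $(2,X)$ is maximal, hence proper. On the other hand, $2$ and $X$ are non-associate prime elements of the UFD $\Z[X]$, so by unique factorization their only common divisors are units. By the reformulation above, no proper principal ideal can contain $(2,X)$, and therefore the PC condition fails.

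A slicker alternative for the second half is to invoke Theorem \ref{main} directly: the domain $\Z[X]$ is atomic but not a PID (it has Krull dimension two), so it cannot satisfy the PC condition, while being a UFD it is certainly AP. Either route finishes the argument. I do not expect any genuine obstacle here, since the statement is a separation realized by an explicit example; the only point demanding a little care is the translation of the PC condition into the language of common divisors, after which verifying that the chosen ideal is proper while its generators are coprime is entirely routine.
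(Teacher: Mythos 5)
Your proof is correct; both of your routes work, and your reformulation of the PC condition in terms of non-unit common divisors is exactly right. The paper's own example is, up to isomorphism, the localization of yours: it takes $R=F[X;\N_0\times\N_0]\cong F[X,Y]$ and passes to $D=R_{\mathfrak{m}}\cong F[X,Y]_{(X,Y)}$, where $\mathfrak{m}$ is the ideal of polynomials with zero constant term, then argues that every element of $D$ is a unit times a monomial $X^{(r,s)}$, so that the only atoms are $X^{(1,0)}$ and $X^{(0,1)}$, both prime; the witnessing ideal $(X^{(1,0)},X^{(0,1)})$ is then proper but contained in no proper principal ideal. So the separating mechanism --- two non-associate primes whose only common divisors are units --- is identical in both arguments. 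What your version buys is robustness: the paper's structural claim about $D$ is in fact incorrect, since $X^{(1,0)}+X^{(0,1)}$ (that is, $X+Y$) is a prime element of $D$ that is not associate to any monomial; the monoid $\N_0\times\N_0$ is not totally ordered, so one cannot factor a smallest exponent out of every polynomial, unlike in the paper's other monoid-ring examples. The paper's example still satisfies the proposition, but the clean justification that $D$ is AP is the one you use for $\Z[X]$: a UFD (or a localization of one) is AP by the standard equivalence the paper itself quotes. Your alternative argument via Theorem \ref{main} is also valid and non-circular, since that theorem is proved independently of this proposition, and it isolates the general fact that no atomic domain other than a PID can satisfy the PC condition.
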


\begin{proof}
Consider the additive monoid $M=\N_0\times \N_0$ and the associated monoid domain $R=F[X;M]$, where $F$ is a field. The polynomials $f\in R$ whose constant term is $0$  form a maximal ideal, say $\m$, of $R$. Let $D=R_\m$ be the localization of $R$ at $\m$. The elements of $D$ have the form
\begin{equation}\label{AP_not_PC_eq1}
x=\frac{X^{(r,s)}\cdot (a_0+a_1X^{(m_1,n_1)}\dots+a_kX^{(m_k,n_k)})}{1+b_1X^{(p_1, q_1)}+\dots+b_lX^{(p_l, q_l)}},
\end{equation}
where $k,l\ge 0$, $a_i, b_j\in F$ $(0\le i\le k, \, 1\le j\le l)$, $a_0\ne 0$, and $(m_1,n_1), \dots, (m_k,n_k)$  (pairwise distinct), $(p_1,q_1),\dots, (p_l,q_l)$ (pairwise distinct), $(r,s)$ are elements of $\N_0\times \N_0$. Hence $x\sim X^{(r,s)}$ and so the only atoms of $D$ are $X^{(0,1)}$ and  $X^{(1,0)}$, and they are both prime. Thus $D$ is an AP domain. Th ideal $(X^{(0,1)}, X^{(1,0)})$ is proper, but it is not contained in a proper principal ideal as no $X^{(r,s)}$ can divide both $X^{(1,0)}$ and $X^{(0,1)}$ unless it is a unit. Thus $D$ is not a PC domain.
\end{proof}

\begin{proposition}\label{PC_not_PS}
There exists a PC domain which is neither pre-Schreier (hence not B\'ezout), nor MIP.
\end{proposition}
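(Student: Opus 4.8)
The plan is to realize the example as a localized monoid domain $D=F[X;M]_{\m}$, choosing a cancellative torsion-free monoid $M$ whose divisibility order is \emph{downward directed} (this will give PC) but which \emph{fails the Riesz refinement property} (this will destroy primality). Two preliminary observations guide the choice of $M$. First, by Proposition \ref{PC_AP} every PC domain is AP, and by Theorem \ref{main} an \emph{atomic} PC domain is a PID; so $D$ must be antimatter. In monoid terms this forbids two non-associate atoms (they could share no common non-unit divisor, breaking directedness), hence $M$ must be atomless. Second, a totally ordered value monoid produces a valuation, hence a B\'ezout (so pre-Schreier) domain. Thus $M$ must be atomless, downward directed, yet non-Riesz; the smallest room for this is rank three. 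I would take $M$ to be the rational points of the \emph{open} cone over a square,
\[
M=\{(\xi,\eta,\zeta)\in\Q^{3}:\ \zeta>0,\ |\xi|+|\eta|<\zeta\}\cup\{(0,0,0)\},
\]
a submonoid of $\Q^{3}$ in which only $0$ is invertible, and let $\m$ be the maximal ideal of $F[X;M]$ consisting of the polynomials with zero constant term.

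Next I would dispatch the two easy properties of $D=F[X;M]_{\m}$. Since $D$ is local with maximal ideal $\m D$, a proper two-generated ideal has two non-unit generators, so PC is equivalent to: any two non-units have a common non-unit divisor. Every non-unit of $D$ is an associate of some $f\in\m$, i.e.\ a polynomial all of whose exponents $(\xi_i,\eta_i,\zeta_i)$ satisfy $\zeta_i-|\xi_i|-|\eta_i|>0$; hence for all sufficiently small $\delta>0$ the ``central axis'' monomial $X^{(0,0,\delta)}$ divides $f$ (the quotient has exponents $(\xi_i,\eta_i,\zeta_i-\delta)\in M$). So $X^{(0,0,\delta)}$ is a common non-unit divisor and $D$ is PC. The same central-axis monomials show $M\setminus\{0\}$ has no minimal element, whence $\m D$ is not principal and $D$ is not MIP.

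The hard part is to show $D$ is not pre-Schreier. I would take the monomial $X^{s}$ with $s=(1,0,\tfrac32)$ and test it against $x=(0,1,\tfrac32)$, $y=(0,-1,\tfrac32)$: one checks $s,x,y\in M$ and $x+y-s=(-1,0,\tfrac32)\in M$, so $X^{s}\mid X^{x}X^{y}$. The obstruction to splitting is that the cross-section $\{|\xi|+|\eta|=1\}$ is a square, so the cone is non-simplicial and the identity $X^{x}X^{y}=X^{(0,0,3)}$ admits no Riesz refinement. Because localization can \emph{create} primality, this must be proved in $D$ itself, for which I would use leading forms with respect to the valuation $v=\zeta$ (the third coordinate), which is multiplicative with $v^{-1}(0)$ equal to the units of $D$. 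A factorization $X^{s}=FG$ with $F\mid X^{x}$, $G\mid X^{y}$ forces the leading forms of $F,G$ to be monomials, say with exponents $(\xi_F,\eta_F,\zeta_F)$, $(\xi_G,\eta_G,\zeta_G)$ where $\xi_F+\xi_G=1$, $\eta_F+\eta_G=0$, $\zeta_F+\zeta_G=\tfrac32$; the divisibilities then impose the four membership conditions $|\xi_F|+|\eta_F|<\zeta_F$, $|\xi_F|+|1-\eta_F|<\zeta_G$ and their $G$-analogues. Summing all four gives $|\xi_F|+|\xi_G|+|\eta_F|+|1-\eta_F|<\tfrac32$, which contradicts $|\xi_F|+|\xi_G|\ge|\xi_F+\xi_G|=1$ and $|\eta_F|+|1-\eta_F|\ge1$. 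Hence $X^{s}$ is non-primal, $D$ is not pre-Schreier, and since B\'ezout $\Rightarrow$ GCD $\Rightarrow$ Schreier $\Rightarrow$ pre-Schreier, $D$ is a fortiori not B\'ezout.

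I expect this last step to be the genuine obstacle, in two respects. The conceptual difficulty is \emph{locating} a non-primal element: downward-directedness (needed for PC) tends to make everything primal, so the Riesz failure has to be engineered geometrically through a non-simplicial cross-section rather than through any ``pointed'' or discrete defect (which would wreck PC). The technical difficulty is \emph{verifying} that non-primality persists after localizing at $\m$, and the leading-form computation above—reducing the question to the four $\ell^{1}$-inequalities and summing them—is the device I would rely on to control the non-monomial divisors that localization introduces.
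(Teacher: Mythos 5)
Your construction is correct in substance, and it takes a genuinely different route from the paper's, even though both follow the same template: localize a monoid domain $R=F[X;M]$ at the maximal ideal $\m$ of polynomials with zero constant term, get PC because any finite set of nonzero exponents admits a common small divisor in $M$, get non-MIP because $M\setminus\{0\}$ has no smallest element under divisibility, and get non-pre-Schreier from a failure of Riesz refinement in $M$. The difference lies in the choice of $M$ and the mechanism of the refinement failure. The paper works inside $\R_+$, with $M=([0,5+\tfrac{i}{2}]\cap\Q)\cup(5+\tfrac{i}{2},\infty)$, $i$ irrational: refining $\alpha+\beta=10+i$, $\alpha+\alpha'=r$, $\beta+\beta'=q-r$ is impossible because each of the last two equations forces one summand below the threshold $5+\tfrac{i}{2}$, hence rational, hence all of $\alpha,\alpha',\beta,\beta'$ rational, contradicting irrationality of $10+i$. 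You work in rank three, where the failure is geometric (non-simplicial square cross-section) and certified by $\ell^1$ triangle inequalities. In particular, your motivating claim that ``the smallest room for this is rank three'' is refuted by the paper itself: a submonoid of $\R_+$ is totally ordered by magnitude, but its \emph{divisibility} order need not be total (here $10+i$ and $10.5$ are divisibility-incomparable), so no valuation/B\'ezout collapse occurs. Conversely, your proof is more robust at one delicate point: to transfer the refinement failure from $M$ to $D=R_\m$, the paper asserts that every element of $D$ is an associate of a monomial $X^\gamma$, which is false as literally stated for its $M$ (e.g. $X^{10+i}+X^{10.5+i/2}$ is not, since $0.5-\tfrac{i}{2}\notin M$) and must be repaired by exactly the kind of leading-form/valuation argument you use --- the same $x^\ast$ device the paper deploys in Proposition \ref{MIP_not_PS}.

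Three small repairs to your write-up. (a) The displayed consequence of ``summing all four'' is misstated: the sum of the four membership conditions is $2(|\xi_F|+|\xi_G|)+|\eta_F|+|1-\eta_F|+|\eta_G|+|1+\eta_G|<2(\zeta_F+\zeta_G)=3$, whose left side is at least $2|\xi_F+\xi_G|+1+1=4$; the contradiction stands, but not via the inequality you wrote. (b) Your membership conditions are strict, which tacitly assumes that none of $F$, $G$, or the two cofactors is a unit of $D$ (exponent equal to the origin); this is fixed uniformly by using the non-strict inequality $|\xi|+|\eta|\le\zeta$, valid for every element of $M$ including $(0,0,0)$, after which the same summation yields $4\le 3$ in all cases. (c) The step ``the leading forms of $F,G$ must be monomials'' deserves justification: the $\zeta$-grading makes leading forms multiplicative, and a homogeneous factorization of a monomial takes place in the group algebra $F[X;\Q^3]$ of a torsion-free (hence orderable) abelian group, which has only trivial units, so each homogeneous factor is a scalar times a monomial.
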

\begin{proof}
Let $i$ be an irrational number such that $0<i<1$. Let $q$ be a rational number such that $19<q<20$. Consider the additive submonoid
\[M=([\,0, \,5+\frac{i}{2}\,]\cap \Q) \cup (5+\frac{i}{2},\,\infty)\]
of $\R_+$.
Since $\displaystyle{5<5+\frac{i}{2}<5.5}$, we have
\[8<q-10-i<10,\]
so that $q-10-i\in M$. Let $r$ be a rational number from $(10, 10+i)$. Then $8<q-r<10$. We claim that it is impossible to find four numbers $\alpha, \beta, \alpha', \beta'\in M$ such that the following relations hold (at the same time):
\begin{align}
\alpha+\beta &= 10+i,\label{star1}\\
\alpha+\alpha' &= r,\label{star2}\\
\beta+\beta' &= q-r.\label{star3}
\end{align}
Suppose to the contrary. Then by \ref{star2} at least one of the elements $\alpha, \alpha'$ is $\displaystyle{\le \frac{r}{2}}$, hence  $\displaystyle{<5+\frac{i}{2}}$, hence rational. Since $\alpha+\alpha'$ is rational, the other element is rational too. Thus $\alpha$ is rational. In the same way $\beta$ is rational. However, by the equation (\ref{star1}) $\alpha+\beta$ is irrational, a contradiction. 

Let now $R=F[X;M]$, where $F$ is a field. Then the polynomials $f\in R$ whose constant term is $0$ form a maximal ideal, say $\m$, of $R$. Let $D=R_\m$, the localization of $R$ at $\m$. The elements of $D$ have the form 
\[x=\frac{X^\gamma\,(a_0+a_1X^{\gamma_1}+\dots+a_mX^{\gamma_m})}{1+b_1X^{\delta_1}+\dots+b_mX^{\delta_m}},\]
where $m,n\ge 0$, $a_i, b_j\in F$ $(0\le i\le m$, $0\le j\le n$), and $\gamma, \gamma_1,\dots, \gamma_m$, $\delta_1, \dots, \delta_n$ are elements of $M$ with $0<\gamma_1<\dots<\gamma_m$, $0<\delta_1<\dots<\delta_n$. We can write $x=X^\gamma u$, where $u$ is a unit in $D$, $\gamma\in M$. The element $x$ is a unit if and only if $\gamma=0$. Since $q-10-i\in M$, we have
\begin{equation}\label{star4}
X^{10+i}\mid X^q=X^r\,X^{q-r}.
\end{equation}
We show that it is not possible to find two elements $y,z\in D$ such that $y\mid X^r$,\, $z\mid X^{q-r}$, and $yz=X^{10+i}$. Suppose to the contrary. Then we can assume $y=X^\alpha$ and $z=X^\beta$ for some $\alpha, \beta\in M$, such that there are $\alpha', \beta'\in M$ satisfying the relations (\ref{star1}), (\ref{star2}), and (\ref{star3}). However, we showed above that that is not possible. Hence $D$ is not pre-Schreier. In particular, $D$ is not B\'ezout.

Note that the maximal ideal $\m R_\m$ of $D$ is not finitely generated since for any $X^{\gamma_1},\dots, X^{\gamma_t}$, with $\gamma_i>0$ $(i=1,\dots, t)$ elements of $M$, there is a $\gamma\in M$ such that $0<\gamma<\min\{\gamma_1, \dots, \gamma_t\}$, so that $X^\gamma\notin (X^{\gamma_1}, \dots, X^{\gamma_t})$. Thus $D$ is not MIP. 

However, $D$ is a PC domain since for any $X^{\gamma_1}, X^{\gamma_2}\in D$ (with $\gamma_1, \gamma_2>0$ elements of $M$) there is a sufficiently small positive rational number $\gamma\in M$ such that $X^\gamma\mid X^{\gamma_1}$ and $X^\gamma\mid X^{\gamma_2}$. Hence $D\supset (X^\gamma)\supseteq (X^{\gamma_1}, X^{\gamma_2})$. 
\end{proof}

\begin{proposition}\label{MIP_not_PS}
There exists a MIP domain which is not pre-Schreier (hence not B\'ezout).
\end{proposition}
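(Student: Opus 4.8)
The plan is to produce the example as a localized monoid domain $D=F[X;M]_{\m}$, exactly in the spirit of Propositions \ref{AP_not_PC} and \ref{PC_not_PS}, but now with $M$ chosen so that the maximal ideal of $D$ becomes principal. The first thing to settle is a structural constraint that dictates the shape of $M$. If $M$ were totally ordered (for instance any submonoid of $\R_+$, as in the earlier propositions), then every nonzero element of $D$ would be an associate of a monomial and divisibility of monomials would be a total order; $D$ would then be a valuation domain, hence B\'ezout, hence pre-Schreier — the opposite of what we want. On the other hand, if the divisibility order on $M$ is well-founded (as for submonoids of $\N_0^n$) while $\m R_\m$ is principal, a short descent argument forces $M\cong\N_0$, so $D$ is a DVR, again pre-Schreier. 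Thus $M$ must be non-Archimedean: not totally ordered (to defeat the valuation/B\'ezout conclusion) and not well-founded (to permit a principal maximal ideal at all).

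I would therefore take the submonoid of $\Z^3$
\[ M=\{(a,b,c)\in\Z^3:\ a\ge 1\}\cup\{(0,b,0):\ b\ge 0\}, \]
set $R=F[X;M]$, let $\m$ be the ideal of polynomials with zero constant term (maximal, as $R/\m\cong F$), and put $D=R_\m$. Two routine monoid computations carry the load. First, $e_0=(0,1,0)$ divides every nonzero element of $M$ and $M\setminus\{0\}=e_0+M$, which gives $\m R_\m=(X^{e_0})$; so the unique maximal ideal of the local ring $D$ is principal and $D$ is an MIP domain. Second, $M$ is not totally ordered, since $(1,0,1)$ and $(1,0,-1)$ are incomparable, which already indicates that $D$ is not a valuation domain.

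To see that $D$ is not pre-Schreier I would exhibit a non-primal monomial. Take $c=X^{(1,0,0)}$, $a=X^{(1,0,1)}$, $b=X^{(1,0,-1)}$; then $c\mid ab$ because $(1,0,1)+(1,0,-1)-(1,0,0)=(1,0,0)\in M$. Suppose $c=c_1c_2$ with $c_1\mid a$ and $c_2\mid b$ in $D$. Here I cannot simply assume $c_1,c_2$ are monomials as in Proposition \ref{PC_not_PS}, because $M$ is not totally ordered; instead I would apply the valuation $v\colon\mathrm{Frac}(D)^{\ast}\to(\Z^3,\mathrm{lex})$ sending an element to the least exponent of its support, for the lexicographic order in which the first coordinate dominates. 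Since $v$ is additive and maps $D\setminus\{0\}$ into $M$, one obtains $v(c_1),v(c_2)\in M$ with $v(c_1)+v(c_2)=(1,0,0)$, $v(c_1)\le(1,0,1)$ and $v(c_2)\le(1,0,-1)$ in the divisibility order of $M$. A direct case analysis (the first coordinates must be $0$ and $1$ in some order, whereupon the third coordinate refuses to cooperate) shows no such $v(c_1),v(c_2)$ exist. Hence $c$ is not primal, so $D$ is not pre-Schreier, and a fortiori not B\'ezout.

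The main obstacle, and the point where the argument departs from the earlier propositions, is precisely the tension isolated in the first paragraph: the maximal ideal must be principal while $D$ must avoid being a valuation domain. This is what forces the non-Archimedean, non-well-founded monoid $M$, and with it the need to replace the clean ``divisors of monomials are monomials'' step of Proposition \ref{PC_not_PS} by the lexicographic-valuation reduction used above; everything else is a mechanical verification inside $M$.
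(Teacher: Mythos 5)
Your proof is correct, but it takes a genuinely different route from the paper's. The paper recycles the irrational-cut monoid $M$ of Proposition \ref{PC_not_PS}: it forms the submonoid $N=(\Z\times(M\setminus\{0\}))\cup\N_0$ of $\Z\times\R_+$, localizes $F[X;N]$ at the monomial maximal ideal, observes that $X^{(1,0)}$ generates the maximal ideal of the localization (every element is either a unit times a power of $X^{(1,0)}$ or lies in $\cap_{k}\n^k$), and then transfers the non-primal configuration already established in Proposition \ref{PC_not_PS} from ring elements down to monoid exponents via the leading-part map $x\mapsto x^\ast$ (the terms of minimal second coordinate), which satisfies $(xy)^\ast=x^\ast y^\ast$. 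You instead build a self-contained integer monoid $M\subseteq\Z^3$ and replace the leading-part map by a lexicographic valuation $v$. Both arguments hinge on the same two structural points: (i) a single element $e$ with $M\setminus\{0\}=e+M$, which forces the maximal ideal of the localization to be principal even though the monoid is far from $\N_0$ (in your example one even has $\m=(X^{(0,1,0)})$ already in $R$); and (ii) a divisibility configuration among exponents that blocks primality, pulled back from $D$ to the monoid by a multiplicative leading-term device. I checked your details: $M$ is a cancellative torsion-free submonoid of $\Z^3$; $M\setminus\{0\}=(0,1,0)+M$ holds; $v$ is additive and maps $D\setminus\{0\}$ into $M$ because denominators outside $\m$ have $v=0$; and the case analysis closes (in each of the two cases exactly one of the divisibility constraints fails, since an element of $M$ with first coordinate $0$ must have third coordinate $0$). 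Your version buys a purely integer-combinatorial, self-contained example with no irrational numbers; the paper's version buys economy, deriving this counterexample and the one of Proposition \ref{PC_not_PS} from a single computation.

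One caveat about your motivational first paragraph: the parenthetical claim that for any submonoid of $\R_+$ the divisibility of monomials is a total order (so that $D$ would be a valuation domain) is false --- the monoid of Proposition \ref{PC_not_PS} is a submonoid of $\R_+$ whose localization is not even pre-Schreier, precisely because $X^{r}$ and $X^{10+i}$ are divisibility-incomparable there. Totality of the ambient order on $\R$ is not the same as totality of the divisibility order $\alpha\mid\beta\Leftrightarrow\beta-\alpha\in M$. This does not affect your actual construction or verification, which never use that claim, but the dichotomy you should state is about the divisibility order on $M$, not about $M$ embedding in $\R$.
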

\begin{proof}
Let the numbers $i,q,r$, and the monoid $M$ be like in Proposition \ref{PC_not_PS}. Consider the submonoid 
\[N=(\Z\times (M\setminus\{0\})) \cup \N_0\]
of the additive monoid $\Z\times \R_+$. Let $R=F[X;N]$, where $F$ is a field. The polynomials $f\in R$ whose constant term is $0$  form a maximal ideal, say $\m$, of $R$. Let $D=R_\m$ be the localization of $R$ at $\m$. The elements of $D$ have the form
\begin{equation}\label{MIP_not_PS_eq1}
x=\frac{a_0X^{(k_0,\alpha_0)}+\dots+a_mX^{(k_m,\alpha_m)}}{1+b_1X^{(l_1,\beta_1)}+\dots+b_nX^{(l_n,\beta_n)}},
\end{equation}
where $m,n\ge 0$, $a_i, b_j\in F$ $(0\le i\le m, \, 1\le j\le n)$, and $(k_0,\alpha_0),\dots, (k_m,\alpha_m)$, $(l_1,\beta_1),\dots, (l_n,\beta_n)$ are elements of $N$. We assume that $\alpha_0\le \dots\le\alpha_m$ and the $(k_i,\alpha_i)$ are pairwise distinct, as well as that $0<\beta_1\le\dots\le\beta_n$ and the $(l_j,\beta_j)$ are pairwise distinct. Let $\nu$ be the largest element of $\{0,1,\dots,m\}$ such that $\alpha_0=\dots=\alpha_\nu$. Then we denote
\[x^\ast=a_0X^{(k_0,\alpha_0)}+\dots+a_\nu X^{(k_\nu,\alpha_0)}.\]
Note that for any $x,y\in D$ we have
\begin{equation}\label{MIP_not_PS_eq2}
(xy)^\ast=x^\ast\,y^\ast.
\end{equation}
Suppose also that $k_0<k_1<\dots<k_\nu$. We consider two cases.

\noindent
\underbar{1st case: $\alpha_0=0$.} Then we factor out $X^{(k_0,0)}$ from the numerator in (\ref{MIP_not_PS_eq1}) and have 
\begin{equation*}
x=(X^{(1,0)})^{k_0}\,\cdot\,\frac{a_0+a_1X^{(k_1-k_0,0)}+\dots+a_\nu X^{(k_\nu-k_0,\alpha_0)}+\dots+a_mX^{(k_m,\alpha_m)}}{1+b_1X^{(l_1,\beta_1)}+\dots+b_nX^{(l_n,\beta_n)}},
\end{equation*}
so that either 
\begin{equation}\label{MIP_not_PS_eq3}
x=u \quad\text{(if $k_0=0$),}
\end{equation}
or, 
\begin{equation}\label{MIP_not_PS_eq4}
x=(X^{(1,0)})^{k_0}\,u \quad\text{(if $k_0\ge 1$),}
\end{equation}
where $u$ is a unit in $D$.

\noindent
\underbar{2nd case: $\alpha_0>0$.} Then we factor out any $X^{(k,0)}$ $(k\in\N_0)$ from the numerator in (\ref{MIP_not_PS_eq1}) and we have 
\begin{equation}\label{MIP_not_PS_eq5}
x=(X^{(1,0)})^k\,\cdot\,\frac{a_0X^{(k_0-k,\alpha_0)}+\dots+a_mX^{(k_m-k,\alpha_m)}}{1+b_1X^{(l_1,\beta_1)}+\dots+b_nX^{(l_n,\beta_n)}}.
\end{equation}
Denote $\n=(X^{(1,0)})$, the ideal of $D$ generated by $X^{(1,0)}$. It follows from (\ref{MIP_not_PS_eq3}),  (\ref{MIP_not_PS_eq4}), and (\ref{MIP_not_PS_eq5}) that $\n=\m R_\m$, the maximal ideal of $D$, and that in the 1st case $x$ is an element of $\n^{k_0}\setminus \n^{k_0+1}$ $(k_0\ge 0)$, and in the 2nd case $x$ is an element of $\n^\omega=\cap_{k=1}^{\infty} \n^k$. Since the maximal ideal is principal, $D$ is a MIP domain. 

We now show that $D$ is not pre-Schreier. By (\ref{star4}) from Proposition \ref{PC_not_PS},
\begin{equation}\label{MIP_not_PS_eq6}
X^{(0,10+i)}\mid X^{(0,q)} = X^{(0,r)}\,X^{(0,q-r)}.
\end{equation}
We show that it is not possible to find two elements $y,z\in D$ such that
\begin{align}
y\mid \,& X^{(0,r)},\notag\\
z \mid\, & X^{(0,q-r)}, \notag\\
\,\,yz= \,&X^{(0,10+i)}.\label{MIP_not_PS_eq7}
\end{align}
Suppose to the contrary. Then
\begin{align}
yy' =&\, X^{(0,r)},\label{MIP_not_PS_eq8}\\
zz' =&\, X^{(0,q-r)},\label{MIP_not_PS_eq9}
\end{align}
for some $y',z'\in D$. Let $\alpha,\beta,\alpha',\beta'$ be the second coordinate of the exponents that appear in $y^\ast$, $z^\ast$, $y'^\ast$, and $z'^\ast$, respectively. Then  from (\ref{MIP_not_PS_eq7}), (\ref{MIP_not_PS_eq8}), and (\ref{MIP_not_PS_eq9}), using (\ref{MIP_not_PS_eq2}), we get 
\begin{align*}
\alpha+\beta &= 10+i,\\
\alpha+\alpha' &= r,\\
\beta+\beta' &= q-r.
\end{align*}
However, this is not possible as we have seen in the proof of Proposition \ref{PC_not_PS}.
\end{proof}

\begin{corollary}\label{PIP_MIP}
The MIP condition is strictly weaker than the PIP condition.
\end{corollary}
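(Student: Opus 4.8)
The plan is to verify the two things the phrase \emph{strictly weaker} requires: that the PIP condition implies the MIP condition, and that this implication cannot be reversed.

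For the implication itself, I would simply observe that every maximal ideal of an integral domain is in particular a prime ideal. Hence if $R$ satisfies the PIP condition (every prime ideal principal), then a fortiori every maximal ideal is principal, so $R$ satisfies the MIP condition. This direction is already recorded in the introduction and needs no additional computation.

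To establish strictness, I would exhibit a MIP domain that fails to be a PIP domain. By Proposition \ref{pid_pip}, the PIP condition is equivalent to the PID condition, so it suffices to produce a MIP domain that is not a PID. The domain $D=R_\m$ constructed in Proposition \ref{MIP_not_PS} is precisely such an example: it was shown there that its maximal ideal is principal (so $D$ is MIP) while $D$ is not pre-Schreier. Since every PID is a B\'ezout domain, and every B\'ezout domain is pre-Schreier (through the chain B\'ezout $\Rightarrow$ GCD $\Rightarrow$ Schreier $\Rightarrow$ pre-Schreier recorded in the preceding diagram), a domain that fails to be pre-Schreier cannot be a PID. Consequently $D$ is a MIP domain but not a PID, equivalently not a PIP domain.

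There is essentially no remaining obstacle, as the entire construction has already been carried out in Proposition \ref{MIP_not_PS}; the only point requiring care is to cite the correct chain of implications in order to conclude that $D$, being non-pre-Schreier, cannot be a PID. Combining this with the implication PIP $\Rightarrow$ MIP then yields the corollary immediately.
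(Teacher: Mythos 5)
Your proof is correct and follows essentially the same route as the paper: both rely on the domain of Proposition \ref{MIP_not_PS} (MIP but not pre-Schreier), the chain of implications showing every PID is pre-Schreier, and Proposition \ref{pid_pip} identifying PIP with PID. The only difference is presentational --- you argue directly that $D$ is MIP but not PIP, whereas the paper phrases the same argument as a one-line contradiction.
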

\begin{proof}
Otherwise every MIP domain would be a PID, hence pre-Schreier, contradicting the previous proposition.
\end{proof}

\end{document}